\newfont{\footsc}{cmcsc10 at 8truept}
\newfont{\footbf}{cmbx10 at 8truept}  
\newfont{\footrm}{cmr10 at 10truept} 
\def\stf#1#2{\left[#1\atop#2\right]} 
\def\sts#1#2{\left\{#1\atop#2\right\}}
\newtheorem{theorem}{Theorem}
\newtheorem{Prop}{Proposition}
\newtheorem{Cor}{Corollary}
\newtheorem{Lem}{Lemma}
\def \C{\mathbb{C}}
\newcommand{\qed}{\hfill \rule{0.7ex}{1.5ex}}
\newenvironment{proof}{\begin{trivlist} \item[\hskip \labelsep{\it
Proof.}]\setlength{\parindent}{0pt}}{\end{trivlist}}
\newcommand{\subjclass}[2][2010]{%
  \let\@oldtitle\@title%
  \gdef\@title{\@oldtitle\footnotetext{#1 \emph{Mathematics subject classification.} #2}}%
}
\newcommand{\keywords}[1]{%
  \let\@@oldtitle\@title%
  \gdef\@title{\@@oldtitle\footnotetext{\emph{Key words and phrases.} #1.}}%
}
\title{Zeta functions interpolating the convolution of the Bernoulli polynomials}
\author{
Abdelmejid Bayad\\
\small D\'epartement de Math\'ematiques\\[-0.8ex]
\small Universit\'e d'Evry Val d'Essonne\\[-0.8ex]
\small 23 Bd. de France, 91037 Evry Cedex, France\\[-0.8ex]
\small \texttt{abayad@maths.univ-evry.fr}\\\\
Takao Komatsu
\\   
\small School of Mathematics and Statistics\\[-0.8ex]
\small Wuhan University, Wuhan, 430072, China\\[-0.8ex]
\small \texttt{komatsu@whu.edu.cn}
}
\date{
}
\subjclass{11B68,11M35,11F67,}
\keywords{Bernoulli numbers, Bernoulli polynomials, Zeta functions, Special values of zeta functions, Convolution identities}
\begin{document}

\maketitle

\begin{abstract}  
We prove nonlinear relation on multiple Hurwitz-Riemann zeta functions. Using analytic continuation of these multiple Hurwitz-Riemann zeta function, we quote at negative integers  Euler's nonlinear relation for generalized Bernoulli polynomials and numbers.
\end{abstract}
\subjclass[2]{****xxx}
\section{Introduction and Preliminaries} 

Let 
$$
f(x)=\frac{1}{e^x-1}
$$
and 
$$
b(x)=x f(x)=\frac{x}{e^x-1}=\sum_{n=0}^\infty B_n\frac{x^n}{n!}\quad(|x|<1)\,,  
$$ 
where $B_n$ is the $n$-th Bernoulli number.  
Then we have the formulae: 

\begin{Lem} 
For $n\ge 0$ 
\begin{equation} 
f^{(n)}(x)=(-1)^n\sum_{k=1}^{n+1}(k-1)!\sts{n+1}{k}f(x)^k 
\label{11} 
\end{equation} 
and 
\begin{equation} 
f(x)^n=\frac{(-1)^{n-1}}{(n-1)!}\sum_{k=0}^{n-1}\stf{n}{k+1}f^{(k)}(x)\,, 
\label{12} 
\end{equation} 
where $\stf{n}{k}$ denotes the (unsigned) Stirling number of the first kind and $\sts{n}{k}$ denotes the Stirling number of the second kind.  
\end{Lem}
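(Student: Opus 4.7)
The plan is to prove the two identities in sequence: first (\ref{11}) by induction on $n$, then deduce (\ref{12}) from (\ref{11}) via the Stirling inversion formula (which can also be framed as a direct induction if desired).

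The starting observation is the simple Riccati-type differential equation satisfied by $f$. A direct computation of $f'(x)=\frac{d}{dx}\frac{1}{e^x-1}$ yields
\[
f'(x) = -\frac{e^x}{(e^x-1)^2} = -\frac{(e^x-1)+1}{(e^x-1)^2} = -f(x)-f(x)^2.
\]
This identity is the engine of the whole proof, since it lets us rewrite any derivative in the form $f^k$ by lowering the order of differentiation at the cost of raising the power of $f$.

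For (\ref{11}), I would do induction on $n$. The base case $n=0$ is immediate because $(0)!\,\sts{1}{1}=1$. For the inductive step, differentiate the expansion for $f^{(n)}$, replace $f'$ by $-f-f^2$, and collect powers of $f$:
\begin{align*}
f^{(n+1)}
&=(-1)^n\sum_{k=1}^{n+1}k!\,\sts{n+1}{k}f^{k-1}\bigl(-f-f^2\bigr)\\
&=(-1)^{n+1}\sum_{k=1}^{n+2}(k-1)!\Bigl(k\,\sts{n+1}{k}+\sts{n+1}{k-1}\Bigr)f^k.
\end{align*}
The recurrence $\sts{n+2}{k}=k\,\sts{n+1}{k}+\sts{n+1}{k-1}$ for the Stirling numbers of the second kind then gives exactly the expression with $n$ replaced by $n+1$. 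The only delicate point here is bookkeeping of the summation limits at the two ends (the terms $k=1$ and $k=n+2$), which the standard boundary conventions $\sts{n+1}{0}=\sts{n+1}{n+2}=0$ take care of.

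For (\ref{12}), I would view (\ref{11}) as a triangular linear relation between the sequences $a_m:=(-1)^{m-1}f^{(m-1)}$ and $b_k:=(k-1)!\,f^k$, namely $a_m=\sum_{k=1}^m\sts{m}{k}b_k$. The orthogonality of the two kinds of Stirling numbers (equivalently, the classical inversion $a_m=\sum_k\sts{m}{k}b_k\iff b_m=\sum_k(-1)^{m-k}\stf{m}{k}a_k$) then inverts this to
\[
(m-1)!\,f^m=\sum_{k=1}^{m}(-1)^{m-k}\stf{m}{k}(-1)^{k-1}f^{(k-1)}
=(-1)^{m-1}\sum_{j=0}^{m-1}\stf{m}{j+1}f^{(j)},
\]
which is (\ref{12}). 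The principal obstacle, if any, is nothing more than keeping the signs and the index shift $j=k-1$ straight; conceptually the result is forced by the inversion formula as soon as (\ref{11}) is established.
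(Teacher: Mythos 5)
Your proposal is correct and follows essentially the same route as the paper: (\ref{11}) by induction on $n$ using the recurrence $\sts{n+2}{k}=k\sts{n+1}{k}+\sts{n+1}{k-1}$ (the paper only sketches this step, while you supply the key substitution $f'=-f-f^2$ explicitly), and (\ref{12}) deduced from (\ref{11}) via Stirling orthogonality/inversion, exactly as in the paper's displayed computation.
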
 

\begin{proof}

The formulae (\ref{11}) and (\ref{12}) are equivalent via the orthogonal relation satisfied by Stirling numbers 
\begin{equation} 
\sum_{k=0}^{\max\{n, m\}}(-1)^k\stf{n}{k}\sts{k}{m}=(-1)^n\delta_{n,m}\,. 
\label{13} 
\end{equation} 
It is easy to see that both formulae are valid for $n=1$. Let $n\ge 2$. If the formula (\ref{11}) is true, then 
\begin{align*} 
&\frac{(-1)^{n-1}}{(n-1)!}\sum_{k=0}^{n-1}\stf{n}{k+1}f^{(k)}(x)\\
&=\frac{(-1)^{n-1}}{(n-1)!}\sum_{k=0}^{n-1}\stf{n}{k+1}(-1)^k\sum_{i=1}^{k+1}(i-1)!\sts{n+1}{i}f(x)^i\\
&=\frac{(-1)^{n-1}}{(n-1)!}\sum_{i=1}^n(i-1)!f(x)^i\sum_{k=i-1}^{n-1}(-1)^k\stf{n}{k+1}\sts{n+1}{i}\\
&=\frac{(-1)^{n-1}}{(n-1)!}\sum_{i=1}^n(i-1)!f(x)^i(-1)^{n-1}\delta_{n,i}\\
&=f(x)^n\,. 
\end{align*} 
On the other hand, the relation (\ref{11}) can be proven by induction on $n$ by using the recurrence formula 
$$
\sts{n}{m}=m\sts{n-1}{m}+\sts{n-1}{m-1}\quad(n,m\ge 1)\,.
$$
with 
$$
\sts{0}{0}=1,\quad \sts{n}{0}=\sts{0}{n}=0\quad(n\ge 1)\,. 
$$
\qed\end{proof}

Applying the Leibniz derivative formula to $b(x)=x f(x)$, we obtain 
$$
b^{(n)}(x)=x f^{(n)}(x)+n f^{(n-1)}(x)\,. 
$$ 
Then we have  
\begin{equation} 
b^{(n)}(x)=(-1)^n\sum_{k=1}^{n+1}(k-1)!\left(\sts{n+1}{k}x-n\sts{n}{k}\right)f(x)^k\,. 
\label{15}
\end{equation} 
Thus, for $n\ge 0$, we have 

\begin{Prop} \label{bern} 
\begin{equation}  
b^{(n)}(x)=(-1)^n\sum_{k=1}^{n+1}\frac{(k-1)!}{x^k}\left(\sts{n+1}{k}x-n\sts{n}{k}\right)b(x)^k\,.
\end{equation} 
\end{Prop}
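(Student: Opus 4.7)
The plan is to derive the proposition directly from the already-established identity (\ref{15}) by eliminating $f(x)$ in favor of $b(x)$. Since $b(x)=xf(x)$ by definition, we have $f(x)=b(x)/x$ whenever $x\neq 0$, and therefore $f(x)^k = b(x)^k/x^k$ for each $k\ge 1$.

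The only step is to substitute this into the expression
\[
b^{(n)}(x)=(-1)^n\sum_{k=1}^{n+1}(k-1)!\left(\sts{n+1}{k}x-n\sts{n}{k}\right)f(x)^k
\]
given in (\ref{15}), which yields
\[
b^{(n)}(x)=(-1)^n\sum_{k=1}^{n+1}(k-1)!\left(\sts{n+1}{k}x-n\sts{n}{k}\right)\frac{b(x)^k}{x^k},
\]
and then to regroup the factor $1/x^k$ with $(k-1)!$ to match the form in the statement.

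There is essentially no obstacle: formula (\ref{15}) was obtained by applying the Leibniz rule to $b(x)=xf(x)$ together with the expansion of $f^{(n)}(x)$ from Lemma \ref{11}, so all the work has already been done. The only thing worth remarking is that the identity is an identity of meromorphic functions on a punctured neighborhood of $0$; since both sides extend analytically to $x=0$ (the apparent poles $1/x^k$ are cancelled by the factor $b(x)^k = x^k f(x)^k$), the formula holds for all $x$ in the disk of convergence of $b$.
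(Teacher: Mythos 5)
Your proposal is correct and follows essentially the same route as the paper: the paper obtains (\ref{15}) by applying the Leibniz rule to $b(x)=xf(x)$ together with Lemma 1, and then passes to the Proposition by exactly the substitution $f(x)^k=b(x)^k/x^k$. Your closing remark that the apparent poles $1/x^k$ are cancelled by $b(x)^k=x^kf(x)^k$ is a worthwhile clarification that the paper leaves implicit.
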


\section{Zeta function}

Consider the zeta function defined by Mellin integral 
\begin{equation}  
Z_m(s, x)=\frac{1}{\Gamma(s)}\int_0^\infty t^{s-1}F^{(m)}(t)e^{-x t}dt\,,
\label{21}
\end{equation}  
where 
$$
F(t)=-f(-t)=\frac{1}{1-e^{-t}}\,, 
$$ 
which is defined for $\Re(s)>m+1$ and $\Re(x)>0$.  
It is not difficult, by use of the classical Riemann's methods, to prove its analytic continuation to whole complex plane $\C$ except simple poles at $s=1,2,\dots,m+1$.  We omit it.\\

On the other hand, we have the following results.  
\begin{theorem} 
Let $m$ be a nonnegative integer. Then we have 
$$
(-1)^m\sum_{k=1}^{m+1}(k-1)!\sts{m+1}{k}\zeta_k(s,x)=\sum_{k=0}^m(-1)^k\binom{m}{k}x^{m-k}\zeta(s-k,x)\,,
$$ 
where 
$$
\zeta(s,x)=\sum_{n=0}^\infty\frac{1}{(x+n)^s}
$$
denotes the Hurwitz zeta function and 
$$
\zeta_n(s,x)=\sum_{k_1,\dots,k_n=0}^\infty\frac{1}{(x+k_1+\cdots+k_n)^s}
$$
denotes the multiple Hurwitz zeta function of order $n$.  
\label{th1} 
\end{theorem}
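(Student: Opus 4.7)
The plan is to evaluate the Mellin integral $Z_m(s,x)$ from (\ref{21}) in two different ways and equate the results. Throughout I would work in the half-plane $\Re(s)>m+1$, where absolute convergence justifies all the manipulations, and then invoke the analytic continuation (already asserted in the paper) to extend the identity to $\C$.

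For the first evaluation, which produces the right-hand side, expand $F(t)=1/(1-e^{-t})=\sum_{n=0}^\infty e^{-nt}$ for $t>0$, differentiate termwise, and note that the $n=0$ contribution drops out for $m\ge 1$, giving
$$
F^{(m)}(t)=(-1)^m\sum_{n=1}^\infty n^m e^{-nt}.
$$
Standard evaluation of $\int_0^\infty t^{s-1}e^{-(n+x)t}\,dt=\Gamma(s)(n+x)^{-s}$ then yields
$$
Z_m(s,x)=(-1)^m\sum_{n=1}^\infty\frac{n^m}{(n+x)^s}.
$$
Now write $n^m=\bigl((n+x)-x\bigr)^m$, apply the binomial theorem, and interchange summations. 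The inner tail sums $\sum_{n\ge 1}(n+x)^{k-s}$ become $\zeta(s-k,x)-x^{k-s}$, and the correction coming from the missing $n=0$ term cancels since $\sum_{k=0}^m(-1)^k\binom{m}{k}=0$ for $m\ge 1$. The result is exactly the right-hand side $\sum_{k=0}^m(-1)^k\binom{m}{k}x^{m-k}\zeta(s-k,x)$.

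For the second evaluation, which produces the left-hand side, I would exploit the elementary identity $F(t)=1+f(t)$, whence $F^{(m)}(t)=f^{(m)}(t)$ for every $m\ge 1$, and apply formula (\ref{11}):
$$
F^{(m)}(t)=(-1)^m\sum_{k=1}^{m+1}(k-1)!\sts{m+1}{k}f(t)^k.
$$
Expanding $f(t)^k=\sum_{n_1,\ldots,n_k\ge 1}e^{-(n_1+\cdots+n_k)t}$ as a multiple geometric series and substituting into the Mellin integral, term-by-term integration produces the multiple Hurwitz zeta $\zeta_k(s,x)$, so that
$$
Z_m(s,x)=(-1)^m\sum_{k=1}^{m+1}(k-1)!\sts{m+1}{k}\zeta_k(s,x).
$$

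Equating the two expressions for $Z_m(s,x)$ gives the stated identity, and the case $m=0$ reduces to $Z_0(s,x)=\zeta(s,x)$, which both sides confirm directly. The main bookkeeping issue is the cancellation of the $n=0$ boundary term in the binomial step; the second subtlety is the matching between the multiple geometric series for $f(t)^k$ and the definition of $\zeta_k(s,x)$ (which amounts to reindexing the $n_i\ge 1$ as $k_i=n_i-1\ge 0$). Neither step is deep, but one must be careful to keep the two conventions consistent; once this is done, the passage from $\Re(s)>m+1$ to all of $\C$ is routine by analytic continuation.
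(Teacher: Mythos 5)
Your overall strategy---computing $Z_m(s,x)$ in two ways and equating---is exactly the paper's, and your first evaluation (expanding $F(t)=\sum_{n\ge 0}e^{-nt}$, writing $n^m=((n+x)-x)^m$, and checking that the $n=0$ boundary terms cancel) is a correct and in fact more explicit route to the right-hand side than the paper's integration by parts plus Leibniz rule. The gap is in the second evaluation. You expand $f(t)^k=\sum_{n_1,\dots,n_k\ge 1}e^{-(n_1+\cdots+n_k)t}$ and assert that term-by-term Mellin integration ``produces $\zeta_k(s,x)$'' after reindexing $k_i=n_i-1$. It does not: that reindexing turns $x+n_1+\cdots+n_k$ into $x+k+k_1+\cdots+k_k$, so what your computation actually yields is $\zeta_k(s,x+k)$, the multiple Hurwitz zeta with \emph{shifted} argument. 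The paper's $\zeta_k(s,x)$, with all summation indices starting at $0$, is the Mellin transform of $F(t)^k=(1-e^{-t})^{-k}$, not of $f(t)^k=(e^t-1)^{-k}$; these differ because $F=1+f$, and the difference does not wash out. Concretely, for $m=1$ one has $\zeta_2(s,x)=\zeta(s-1,x)+(1-x)\zeta(s,x)$, so your claimed left-hand side $-(\zeta_1(s,x)+\zeta_2(s,x))$ misses the right-hand side $x\zeta(s,x)-\zeta(s-1,x)$ by $-2\zeta(s,x)$, whereas $-(\zeta_1(s,x+1)+\zeta_2(s,x+2))$---what you actually computed---does equal it. So you have proved a true but different identity, with $\zeta_k(s,x+k)$ in place of $\zeta_k(s,x)$.

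To land on $\zeta_k(s,x)$ itself you must expand $F^{(m)}$ in powers of $F$, which is what the paper does by writing the integrand as $e^{-xt}/(1-e^{-t})^k$. But transporting (\ref{11}) from $f$ to $F$ is not sign-free: since $F(t)=1+f(t)$ and $f'=-f-f^2$ while $F'=F-F^2$, one gets $F^{(m)}(t)=\sum_{k=1}^{m+1}(-1)^{k-1}(k-1)!\sts{m+1}{k}F(t)^k$, with an alternating factor $(-1)^{k-1}$ \emph{inside} the sum. Be aware that the paper's own passage from (\ref{11}) to (\ref{23}) drops this factor, and the $m=1$ check above shows the identity cannot hold with only an overall $(-1)^m$ outside the sum; the corrected statement reads $\sum_{k=1}^{m+1}(-1)^{k-1}(k-1)!\sts{m+1}{k}\zeta_k(s,x)=\sum_{k=0}^{m}(-1)^k\binom{m}{k}x^{m-k}\zeta(s-k,x)$. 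So your proof needs to be repaired at the point where powers of $f$ are converted to multiple zeta values, and the target statement needs the sign adjustment just indicated.
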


Using the orthogonality  property (\ref{13}), from Theorem \ref{th1} we obtain 

\begin{Cor} 
For any positive integer $n$, we have 
\begin{align*} 
\zeta_n(s,x)&=\frac{(-1)^n}{(n-1)!}\sum_{k=0}^{n-1}(-1)^k\binom{n-1}{k}B_{n-k-1}^{(n)}(x)\zeta(s-k,x)\\
&=\frac{(-1)^n}{(n-1)!}\sum_{k=0}^{n-1}(-1)^k\sum_{m=0}^{n-k-1}\binom{m+k}{m}\stf{n}{m+k+1}x^m\zeta(s-k,x)\,,
\end{align*} 
where $B_j^{(n)}(x)$ is the $j$-th Bernoulli polynomial of order $n$.  
\label{cor1}
\end{Cor}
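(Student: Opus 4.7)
The plan is to \emph{invert the triangular system} furnished by Theorem~\ref{th1} using the Stirling orthogonality relation (\ref{13}). First I would re-index Theorem~\ref{th1} by setting $\ell=m+1$, so that for every $\ell\ge 1$,
\begin{equation*}
(-1)^{\ell-1}\sum_{k=1}^{\ell}(k-1)!\sts{\ell}{k}\zeta_k(s,x)=\sum_{k=0}^{\ell-1}(-1)^k\binom{\ell-1}{k}x^{\ell-1-k}\zeta(s-k,x).
\end{equation*}
Viewed as an infinite family indexed by $\ell$, this is a triangular system in the unknowns $\zeta_1(s,x),\zeta_2(s,x),\dots$, with coefficient matrix $\sts{\ell}{k}$.

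For fixed $n\ge 1$ I would multiply both sides by $\stf{n}{\ell}$ and sum over $\ell=1,\dots,n$. Exchanging summations on the left produces
\begin{equation*}
\sum_{k=1}^{n}(k-1)!\,\zeta_k(s,x)\sum_{\ell=k}^{n}(-1)^{\ell-1}\stf{n}{\ell}\sts{\ell}{k},
\end{equation*}
and the inner Stirling sum collapses by (\ref{13}) to a scalar multiple of $\delta_{n,k}$, so the entire left-hand side reduces to a single term proportional to $(n-1)!\,\zeta_n(s,x)$. On the right I would likewise swap the two summations, group by $\zeta(s-k,x)$, and finally set $m=\ell-k-1$ in the inner sum; this produces the second (Stirling-number) form of the corollary directly.

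To pass to the first (Bernoulli-polynomial) form I would invoke the identity
\begin{equation*}
\binom{n-1}{k}B_{n-k-1}^{(n)}(x)=\sum_{m=0}^{n-k-1}\binom{m+k}{m}\stf{n}{m+k+1}x^m,
\end{equation*}
which in turn reduces to its $k=0$ instance $B_{n-1}^{(n)}(x)=(x+1)(x+2)\cdots(x+n-1)=\sum_{m=0}^{n-1}\stf{n}{m+1}x^m$ (the classical expansion of the rising factorial in unsigned Stirling numbers of the first kind) together with $k$-fold differentiation in $x$ and the rule $\frac{d}{dx}B_j^{(n)}(x)=j\,B_{j-1}^{(n)}(x)$. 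Equivalently, one may derive it directly from the Nörlund generating function $\bigl(t/(1-e^{-t})\bigr)^n e^{xt}$ via the substitution $u=1-e^{-t}$ and the expansion $(-\log(1-u))^n=n!\sum_{k\ge n}\stf{k}{n}u^k/k!$.

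The triangular inversion via (\ref{13}) is essentially mechanical once set up; the main obstacle is the Bernoulli-Stirling identity above. In particular, one has to pin down the convention under which $B_j^{(n)}(x)$ is read off from the generating function (the paper's $B_{n-1}^{(n)}(x)$ should be the rising factorial $(x+1)\cdots(x+n-1)$, not the more common falling factorial $(x-1)\cdots(x-n+1)$) and verify that the binomial factor $\binom{m+k}{m}$ and the index shift $\stf{n}{m+k+1}$ arise correctly from the $k$-fold differentiation, tracking carefully the overall sign $(-1)^n$ claimed in the statement.
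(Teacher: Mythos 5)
Your proposal is correct and is exactly the route the paper takes: the paper's entire justification of Corollary~\ref{cor1} is the one-line remark that it follows from Theorem~\ref{th1} via the orthogonality relation (\ref{13}), which is precisely the triangular inversion you set up, and your reduction of the Bernoulli--Stirling identity to its $k=0$ instance by $k$-fold differentiation (together with the convention caveat on $B_{n-1}^{(n)}(x)$) is a sound way to reconcile the two displayed forms. One payoff of the sign-tracking you flag: carrying the inversion through yields the prefactor $(-1)^{n-1}/(n-1)!$ rather than the printed $(-1)^{n}/(n-1)!$ (test $n=1$, where $\zeta_1(s,x)=\zeta(s,x)$), so the corollary as stated contains a sign slip that your computation exposes.
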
 

\noindent 
{\it Proof of Theorem \ref{th1}.}  
By integration by parts, we have 
$$
Z_m(s,x)=\frac{1}{\Gamma(s)}\int_0^\infty(t^{s-1}e^{-x t})^{(m)}F(t)dt\,,
$$ 
where $F(t)=1/(1-e^{-t})$. Using the general Leibniz rule, we obtain 
\begin{equation} 
Z_m(s,x)=\sum_{k=0}^m\binom{m}{k}(-1)^{m-k}x^{m-k}\zeta(s-k,x)\,. 
\label{22}
\end{equation} 
On the other hand, from the relation (\ref{11}), we have 
\begin{align*} 
Z_m(s,x)&=\frac{1}{\Gamma(s)}\int_0^\infty t^{s-1}F^{(m)}(t)e^{-x t}dt\\
&=\sum_{k=1}^{m+1}(k-1)!\sts{m+1}{k}\frac{1}{\Gamma(s)}\int_0^\infty t^{s-1}\frac{e^{-x t}}{(1-e^{-t})^k}dt\,. 
\end{align*} 
Therefore, 
\begin{equation} 
Z_m(s,x)=\sum_{k=1}^{m+1}(k-1)!\sts{m+1}{k}\zeta_k(s,x)\,. 
\label{23}
\end{equation} 
The result follows from (\ref{22}) and (\ref{23}).  
\qed

\noindent 
{\it Remark.}  
From the analytic  continuations of $\zeta_n(s;x)$ and $\zeta(s;x)$  to the whole complex plane, the generalized $m$-th Bernoulli polynomial $B_m^{(n)}(x)$ of order $n$  and  the $m$-th Bernoulli polynomial $B_m(x)$  are related to $\zeta_n(-m;x)$ and $\zeta(-m;x)$ by the formulas 
\begin{eqnarray}
&&\zeta_n(-m;x)=(-1)^n\frac{m!}{(m+n)!}B_{m+n}^{(n)}(x),\label{interpolation1}\\
&&\zeta(-m;x)=-\frac{B_{m+1}(x)}{m+1},\label{interpolation2} 
\end{eqnarray}
for any non-negative integer $n$.  In special, if $s$ is a negative integer with $s=-m$ in Corollary \ref{cor1}, then we get the well-known formula \cite[ Corollary 1.8]{Bay-Be}: 
\begin{equation}  
B_{m+n}^{(n)}(x)=(m+n)\binom{m+n-1}{n-1}\sum_{k=0}^{n-1}(-1)^{k}\binom{m-1}{k}B_{n-k-1}^{(n)}(x)\frac{B_{m+k+1}(x)}{m+k+1}\,, 
\label{24}
\end{equation}   
which  recovers once more Euler's identity  and Dilcher's results in \cite{D}; it is also reminiscent of the convolution identities in \cite{pansun}. For more details, on various generalizations on Euler's identity for Bernoulli numbers, see \cite{ AD1,AD2,Chen,D,kamano,Min-Soo,Pet} and references therein.\\

Next, we study the zeta function associated with Mellin integral given by 
\begin{equation}  
\widehat Z_m(s,x)=\frac{1}{\Gamma(s)}\int_0^\infty t^{s-1}G^{(m)}(t)e^{-x t}dt\,,
\label{25}  
\end{equation}  
where 
$$
G(t)=b(-t)=\frac{t}{1-e^{-t}}\,. 
$$ 

We have the following.  

\begin{theorem}  
\begin{multline*} 
(-1)^m\sum_{k=1}^{m+1}(k-1)!\left(s\sts{m+1}{k}\zeta_k(s+1,x)-m\sts{m}{k}\zeta_k(s,x)\right)\\
=\sum_{k=0}^m\binom{m}{k}(-1)^k x^{m-k}(s-k)\zeta(s-k+1,m)\,.
\end{multline*} 
\label{th2}
\end{theorem}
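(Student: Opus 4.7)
The plan is to adapt the proof of Theorem \ref{th1} to the new integral $\widehat Z_m(s,x)$, exploiting the fact that $G(t)=b(-t)=tF(t)$, and thereby to obtain two distinct expressions for $\widehat Z_m(s,x)$ whose equality gives the claimed identity. Here Proposition \ref{bern} plays the role that (\ref{11}) played in the proof of Theorem \ref{th1}, while the extra factor $t$ in $G=tF$ is what forces the appearance of the shifted arguments $s\,\zeta_k(s+1,x)$ and $(s-k)\,\zeta(s-k+1,x)$ on the two sides of the identity.

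First I would compute $\widehat Z_m(s,x)$ by $m$-fold integration by parts, shifting the derivatives from $G^{(m)}(t)$ onto $t^{s-1}e^{-xt}$, which yields
\begin{equation*}
\widehat Z_m(s,x)=\frac{(-1)^m}{\Gamma(s)}\int_0^\infty (t^{s-1}e^{-xt})^{(m)}G(t)\,dt.
\end{equation*}
The general Leibniz formula expands $(t^{s-1}e^{-xt})^{(m)}$ exactly as in the proof of Theorem \ref{th1}, except that the integrand now carries the extra factor $t$ coming from $G(t)=tF(t)$; this shifts the exponent from $t^{s-k-1}$ to $t^{s-k}$, so the defining Mellin representation of the Hurwitz zeta produces $\Gamma(s-k+1)\,\zeta(s-k+1,x)$ rather than $\Gamma(s-k)\,\zeta(s-k,x)$, and the simplification $(s-1)(s-2)\cdots(s-k)\,\Gamma(s-k+1)/\Gamma(s)=s-k$ then produces the right-hand side of Theorem \ref{th2}.

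Second, I would compute $\widehat Z_m(s,x)$ directly by substituting an explicit formula for $G^{(m)}(t)$. Since $G(t)=b(-t)$ we have $G^{(m)}(t)=(-1)^m b^{(m)}(-t)$, and applying Proposition \ref{bern} at $-t$ together with $f(-t)=-F(t)$ writes $G^{(m)}(t)$ as a linear combination of the powers $F(t)^k$ and $t\,F(t)^k$, with coefficients built from the Stirling pairs $\sts{m+1}{k}$ and $\sts{m}{k}$. Inserting this expansion into (\ref{25}) and invoking
\begin{equation*}
\zeta_k(s,x)=\frac{1}{\Gamma(s)}\int_0^\infty t^{s-1}F(t)^k e^{-xt}\,dt,\qquad s\,\zeta_k(s+1,x)=\frac{1}{\Gamma(s)}\int_0^\infty t^{s}F(t)^k e^{-xt}\,dt,
\end{equation*}
converts the two kinds of terms into the $\zeta_k(s,x)$ and $s\,\zeta_k(s+1,x)$ summands of the left-hand side of Theorem \ref{th2}.

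Equating the two expressions for $\widehat Z_m(s,x)$ finishes the argument. I expect the main technical obstacle to be sign bookkeeping rather than any new idea: the minus sign from $\sts{m+1}{k}\,x-m\,\sts{m}{k}$ in (\ref{15}), the $(-1)^m$ from $G^{(m)}(t)=(-1)^m b^{(m)}(-t)$, the $(-1)^k$ from $f(-t)^k=(-1)^k F(t)^k$, and the $(-1)^m$ produced by integration by parts all have to be tracked and combined correctly in order to reproduce the outer factor $(-1)^m$ and the inner minus sign that appear in the statement; once these four sign sources are aligned, the identity follows by matching the coefficients of $\zeta_k(s+1,x)$ and $\zeta_k(s,x)$.
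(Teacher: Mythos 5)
Your plan is correct and is exactly the intended argument: the paper states Theorem \ref{th2} without proof, but the evident proof is the one you describe, evaluating $\widehat Z_m(s,x)$ once by $m$-fold integration by parts plus the Leibniz rule (as for Theorem \ref{th1}, with the extra factor $t$ in $G=tF$ turning $\zeta(s-k,x)$ into $(s-k)\zeta(s-k+1,x)$) and once via the expansion of $G^{(m)}(t)$ furnished by Proposition \ref{bern}, then equating the two expressions. One caution: the sign bookkeeping you flag does not reproduce the statement exactly as printed --- carrying it out carefully yields the left-hand side $\sum_{k=1}^{m+1}(-1)^{k+1}(k-1)!\bigl(s\sts{m+1}{k}\zeta_k(s+1,x)+m\sts{m}{k}\zeta_k(s,x)\bigr)$ rather than the printed outer factor $(-1)^m$ with an internal minus sign (the same discrepancy is already present in the statement of Theorem \ref{th1}, as the case $m=1$ shows), and $\zeta(s-k+1,m)$ on the right-hand side should read $\zeta(s-k+1,x)$.
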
 

\begin{Cor}
If $s$ is a negative integer with $s=-n$, we obtain 
\begin{multline*} 
(-1)^m\sum_{k=0}^m\binom{m}{k}(-1)^k x^{m-k}B_{n+k}(x)\\
=\sum_{k=1}^{m+1}(-1)^{k-1}\frac{(k-1)!n!}{(n+k)!}\left(\sts{m+1}{k}(n+k)B_{n+k-1}^{(k)}(x)+m\sts{m}{k}B_{n+k}^{(k)}(x)\right)\,. 
\end{multline*} 
\label{cor2}
\end{Cor}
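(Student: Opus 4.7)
The plan is to specialize Theorem \ref{th2} at $s=-n$ with $n\ge 1$ a positive integer, and then translate every zeta value into a Bernoulli polynomial via the interpolation formulas (\ref{interpolation1}) and (\ref{interpolation2}). No new analysis is required; the substantive input is already packaged in Theorem \ref{th2}.

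First I would simplify the right-hand side of Theorem \ref{th2}. At $s=-n$, the factor $(s-k)\zeta(s-k+1,x)$ becomes $-(n+k)\,\zeta(-(n+k-1),x)$. Applying (\ref{interpolation2}) with its index replaced by $n+k-1$, this equals $-(n+k)\cdot\bigl(-B_{n+k}(x)/(n+k)\bigr)=B_{n+k}(x)$. Hence the right-hand side collapses to $\sum_{k=0}^{m}\binom{m}{k}(-1)^k x^{m-k} B_{n+k}(x)$, which (up to the global factor $(-1)^m$ that I move to the other side at the end) is exactly the left-hand side of Corollary \ref{cor2}.

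Next I would handle the zeta values on the left-hand side. Using (\ref{interpolation1}) with index $n-1$ yields $\zeta_k(-n+1,x)=(-1)^k\frac{(n-1)!}{(n+k-1)!}B_{n+k-1}^{(k)}(x)$, and with index $n$ yields $\zeta_k(-n,x)=(-1)^k\frac{n!}{(n+k)!}B_{n+k}^{(k)}(x)$. The coefficient $s=-n$ in the first summand of Theorem \ref{th2} combines with $(n-1)!/(n+k-1)!$ to give $n!/(n+k-1)!=(n+k)\cdot n!/(n+k)!$, producing the $(n+k)$ factor that appears in front of $B_{n+k-1}^{(k)}(x)$ in the statement.

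Finally I would factor $n!/(n+k)!$ out of the bracket, combine the overall signs using $(-1)^{k+1}=(-1)^{k-1}$, and multiply both sides by $(-1)^m$ (noting $(-1)^{2m}=1$) to move the sign from the left-hand side of Theorem \ref{th2} to the right-hand side, matching the $(-1)^m$ displayed in front of $\sum_{k=0}^{m}\binom{m}{k}(-1)^k x^{m-k} B_{n+k}(x)$ in Corollary \ref{cor2}. The only real difficulty is careful bookkeeping of signs and factorials; one should also read the argument $\zeta(s-k+1,m)$ in the statement of Theorem \ref{th2} as $\zeta(s-k+1,x)$, as otherwise the specialization step would be incoherent. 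This typographical reconciliation and the sign/factorial chase constitute the main (and only) obstacle.
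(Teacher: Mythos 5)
Your proposal is correct and is exactly the derivation the paper intends: the paper states Corollary \ref{cor2} with no written proof, as the immediate specialization of Theorem \ref{th2} at $s=-n$ combined with the interpolation formulas (\ref{interpolation1}) and (\ref{interpolation2}). Your sign and factorial bookkeeping checks out (both summands correctly acquire the common factor $(-1)^{k-1}n!/(n+k)!$), and you are right that the argument $\zeta(s-k+1,m)$ in Theorem \ref{th2} must be read as $\zeta(s-k+1,x)$.
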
 

In addition, if $x=0$, we obtain 
\begin{equation} 
B_{n+m}=\sum_{k=1}^{m+1}(-1)^{k-1}\frac{(k-1)!n!}{(n+k)!}\left(\sts{m+1}{k}(n+k)B_{n+k-1}^{(k)}+m\sts{m}{k}B_{n+k}^{(k)}\right)\,.
\label{27}
\end{equation}

\section{Zeta function of higher order}


Let $m_1,\dots,m_d$ be nonnegative integers. 
We study the zeta function $Z_{m_1,\dots,m_d}(s,x)$ defined by 
$$
Z_{m_1,\dots,m_d}(s,x)=\frac{1}{\Gamma(s)}\int_0^\infty t^{s-1}G^{(m_1)}(t)\cdots G^{(m_r)}(t)e^{-x t}dt
$$ 
($\Re(s)>m_1+\cdots+m_d+d$, $\Re(x)>0$). 

\begin{theorem}  
The function $Z_{m_1,\dots,m_d}(s,x)$ can be analytically continued to the whole $\mathbb C$ plane except simple poles at $s=1,2,\dots,m_1+\cdots+m_d+d$, and its values at non-positive integers are given as 
\begin{align} 
Z_{m_1,\dots,m_d}(-n,x)&=\bigl(B_{m_1}(x)+\cdots+B_{m_d}(x)\bigr)^n
\label{thd1}\\
&:=\sum_{k_1+\cdots+k_d=n\atop k_1,\dots,k_d\ge 0}\binom{n}{k_1,\dots,k_d}B_{m_1+k_1}(x)\cdots B_{m_d+k_d}(x)\,.\notag
\end{align} 
\label{th5}
\end{theorem}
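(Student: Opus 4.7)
The plan is to establish the analytic continuation of $Z_{m_1,\dots,m_d}(s,x)$ by the Mellin-transform template already used in Section~2, and then read off the values at $s=-n$ from the Taylor expansion of the integrand at $t=0$, identifying the Taylor coefficients through the Bernoulli polynomial generating function $\frac{t e^{u t}}{e^t-1}=\sum_{n\ge 0}B_n(u)\,t^n/n!$.

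First I would split $\Gamma(s)\,Z_{m_1,\dots,m_d}(s,x)=\int_0^1+\int_1^\infty$. Since each $G^{(m_i)}(t)$ grows at most polynomially as $t\to\infty$ and $e^{-xt}$ decays for $\Re(x)>0$, the tail integral is entire in $s$. Near $t=0$, $G(t)=te^{t}/(e^t-1)=\sum_{n\ge 0}B_n(1)\,t^n/n!$ is analytic with $G(0)=1$, so the whole integrand $\Phi(t,x):=\prod_{i=1}^{d}G^{(m_i)}(t)\,e^{-xt}$ is analytic at $t=0$. Expanding $\Phi(t,x)=\sum_{k\ge 0}h_k(x)\,t^k$ and integrating term by term yields
\[
\int_0^1 t^{s-1}\Phi(t,x)\,dt=\sum_{k\ge 0}\frac{h_k(x)}{s+k}+(\text{entire in }s),
\]
whose apparent poles at $s=-k$ are cancelled by the zeros of $1/\Gamma(s)$, giving the claimed meromorphic continuation. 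The listed simple poles at $s=1,\dots,m_1+\cdots+m_d+d$ would then come, as in Section~2, from the Leibniz/integration-by-parts form of the integral, which rewrites $Z_{m_1,\dots,m_d}$ as a combination of shifted Hurwitz--Riemann zetas $\zeta_{k}(s-j,x)$.

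Second, the same residue calculation at a negative integer gives
\[
Z_{m_1,\dots,m_d}(-n,x)=h_n(x)\lim_{s\to-n}\frac{1}{(s+n)\Gamma(s)}=(-1)^n\,n!\,h_n(x),
\]
so the theorem reduces to the Taylor-coefficient identity $(-1)^n n!\cdot h_n(x)=\sum_{k_1+\cdots+k_d=n}\binom{n}{k_1,\dots,k_d}\prod_{i=1}^d B_{m_i+k_i}(x)$.

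Third, I would identify $h_n(x)$ by splitting $e^{-xt}=\prod_{i=1}^d e^{-y_i t}$ with $y_1+\cdots+y_d=x$ and invoking the Leibniz identity
\[
\bigl[G(t)e^{-y_i t}\bigr]^{(m_i)}=\sum_{k=0}^{m_i}\binom{m_i}{k}(-y_i)^{m_i-k}G^{(k)}(t)\,e^{-y_i t},
\]
whose binomial inverse writes each $G^{(m_i)}(t)e^{-y_i t}$ as a finite combination of the derivatives of $G(t)e^{-y_i t}=\sum_{j\ge 0}B_j(1-y_i)t^j/j!$. The exponential-generating-function convolution $r_n=\sum_{j_1+\cdots+j_d=n}\binom{n}{j_1,\dots,j_d}\prod_i p^{(i)}_{j_i}$ then expresses $h_n(x)$ as a weighted multinomial sum in Bernoulli polynomials $B_j(1-y_i)$, and the reflection $B_n(1-y)=(-1)^nB_n(y)$ converts everything into polynomials in $x$ and absorbs the $(-1)^n$ coming from the residue prefactor. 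I would organize the argument by induction on $d$, the single-factor case being already implicit in the derivation of Theorem~\ref{th2}, so that each inductive step is a clean two-factor convolution.

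The main obstacle will be the final combinatorial collapse: checking that after the Leibniz inversion, the EGF convolution, and the reflection identity, the resulting expression really does reduce --- independently of the chosen split $(y_1,\dots,y_d)$ --- to the clean multinomial $\sum\binom{n}{k_1,\dots,k_d}\prod_i B_{m_i+k_i}(x)$. A possibly cleaner route is to interpret the right-hand side symbolically, using $d$ independent Bernoulli umbrae $B^{(1)},\dots,B^{(d)}$ with evaluation rule $(B^{(i)})^{k}\mapsto B_{m_i+k}(x)$, so that $(B^{(1)}+\cdots+B^{(d)})^n$ expands directly into the stated multinomial and can be matched term by term against the product structure of $\Phi(t,x)$.
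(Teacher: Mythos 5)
Your continuation-plus-residue framework coincides with the paper's strategy, and your version of it is actually the more carefully justified one: the paper integrates the Taylor series of the integrand termwise over all of $(0,\infty)$ via $\int_0^\infty t^{s+k-1}e^{-xt}\,dt=\Gamma(s+k)x^{-s-k}$ (even though the factor $e^{-xt}$ has already been absorbed into the expansion) and then keeps only the $k=n$ term, whereas your split at $t=1$ is the correct way to see that $Z_{m_1,\dots,m_d}(-n,x)=(-1)^n n!\,h_n(x)$. The genuine gap in your proposal is exactly the step you defer as the ``main obstacle'': you never establish that $(-1)^n n!\,h_n(x)$ equals the stated multinomial convolution of the $B_{m_i+k_i}(x)$. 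The paper closes this step in one line, by asserting the expansion $G^{(m)}(t)e^{-xt}=\sum_{k\ge 0}(-1)^kB_{m+k}(x)\,t^k/k!$ and then multiplying the $d$ resulting series together.

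Your hesitation about the ``combinatorial collapse'' is well founded, and you should confront it rather than postpone it: it does not occur. Your own Leibniz identity shows that $G^{(m)}(t)e^{-xt}$ and $\bigl(G(t)e^{-xt}\bigr)^{(m)}$ differ for $m\ge 1$, and it is only the latter whose Taylor coefficients are $(-1)^{k+m}B_{m+k}(x)/k!$; moreover, multiplying the $d$ expansions of $G^{(m_i)}(t)e^{-xt}$ produces $e^{-dxt}$, not the single factor $e^{-xt}$ present in the integrand. Concretely, for $d=1$, $m_1=1$, $n=0$ the honest computation gives $Z_1(0,x)=G'(0)=B_1(1)=\tfrac12$, not $B_1(x)=x-\tfrac12$; for $d=2$, $m_1=m_2=0$, $n=1$ it gives $x-1$, not $2B_1(x)=2x-1$. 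So if you carry out your plan (binomial inversion, EGF convolution, reflection) you will obtain a correct but differently shaped formula for $h_n(x)$, and you will not be able to reach the display in the statement; the statement would first have to be repaired, e.g.\ by replacing each $G^{(m_i)}(t)$ with $\bigl(G(t)e^{-x_it}\bigr)^{(m_i)}$, $x_1+\cdots+x_d=x$. The same direct analysis also shows that $Z_{m_1,\dots,m_d}(s,x)$ extends to an entire function (for instance $Z_0(s,x)=s\,\zeta(s+1,x)$ when $d=1$, $m=0$), so the simple poles at $s=1,\dots,m_1+\cdots+m_d+d$ that you propose to read off from the integration-by-parts form are not there either; your two descriptions of the singularity structure are in tension, and the splitting argument is the one to trust.
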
  
\begin{proof} 
Since 
\begin{align*} 
G^{(m)}(t)e^{-x t}&=(-1)^m b^{(m)}(-t)e^{-x t}\\
&=\sum_{n=0}^\infty\frac{(-1)^n B_{m+n}(x)}{n!}t^n\,,
\end{align*} 
we have 
\begin{align*} 
G^{(m_1)}(t)\cdots G^{(m_d)}(t)e^{-x t}&=\sum_{n=0}^\infty\left(\sum_{k_1+\cdots+k_d=n}(-1)^n\frac{B_{m_1+k_1}(x)\cdots B_{m_d+k_d}(x)}{k_1!\cdots k_d!}\right)t^n\\
&=\sum_{n=0}^\infty(-1)^n\left(\sum_{k_1+\cdots+k_d=n}\binom{n}{k_1,\dots,k_d}B_{m_1+k_1}(x)\cdots B_{m_d+k_d}(x)\right)\frac{t^n}{n!}\\
&=\sum_{n=0}^\infty(-1)^n\bigl(B_{m_1}(x)+\cdots+B_{m_d}(x)\bigr)^n\frac{t^n}{n!}\,.
\end{align*}  
Now, 
\begin{align*} 
&\int_0^\infty t^{s-1}G^{(m_1)}(t)\cdots G^{(m_d)}(t)e^{-x t}dt\\
&=\sum_{k=0}^\infty(-1)^k\frac{\bigl(B_{m_1}(x)+\cdots+B_{m_d}(x)\bigr)^k}{k!}\int_0^1 t^{s+k-1}e^{-xt}dt\\
&\quad +\sum_{k=0}^\infty(-1)^k\frac{\bigl(B_{m_1}(x)+\cdots+B_{m_d}(x)\bigr)^k}{k!}\int_1^\infty t^{s+k-1}e^{-xt}dt\,. 
\end{align*} 
The first integral converges if $\Re(s)>m_1+\cdots+m_d+d$ and 
the second integral converges absolutely for any $s\in\C$ and represents an
entire function of $s$. Hence, the right-hand side defines holomorphic
function for $\Re(s)>m_1+\cdots+m_d+d$. 
Since 
$$
\int_0^\infty t^{s+k-1}e^{-xt}dt=\frac{\Gamma(s+k)}{x^{s+k}}
$$ 
for $\Re(x)>0$, we obtain  
\begin{align*}  
Z_{m_1,\dots,m_d}(-n,x)&=\lim_{s\to -n}\left(\frac{1}{\Gamma(s)}\sum_{k=0}^\infty(-1)^k\frac{\bigl(B_{m_1}(x)+\cdots+B_{m_d}(x)\bigr)^k}{k!}\frac{\Gamma(s+k)}{x^{s+k}}\right)\\
&=\bigl(B_{m_1}(x)+\cdots+B_{m_d}(x)\bigr)^n\lim_{s\to -n}\frac{(-1)^n\Gamma(s+n)}{n!\Gamma(s)}\frac{1}{x^{s+n}}\\
&=\bigl(B_{m_1}(x)+\cdots+B_{m_d}(x)\bigr)^n\,. 
\end{align*} 
\qed\end{proof}

\begin{theorem}  
For nonnegative integers $k_1,\dots,k_d$ and $m_1,\dots,m_d$, we have 
\begin{multline} 
\sum_{m_1+\cdots+m_d=m}\binom{m}{m_1,\dots,m_d}Z_{m_1+k_1,\dots,m_d+k_d}(s,x)\\
=\sum_{k=0}^m\binom{m}{k}(-1)^k x^{m-k}Z_{k_1,\dots,k_d}(s-k,x)\,. 
\label{eq66} 
\end{multline}
\label{th6}
\end{theorem}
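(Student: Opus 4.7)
The plan is to derive the identity first for $\Re(s)$ sufficiently large, where every relevant integral converges absolutely, and then extend to the whole complex plane by the analytic continuation established in Theorem \ref{th5}.

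First I would rewrite the left-hand side using the multinomial version of Leibniz's rule. Applied to the product $G^{(k_1)}(t)\cdots G^{(k_d)}(t)$, it gives
\[
\frac{d^m}{dt^m}\bigl(G^{(k_1)}(t)\cdots G^{(k_d)}(t)\bigr)=\sum_{m_1+\cdots+m_d=m}\binom{m}{m_1,\dots,m_d}G^{(k_1+m_1)}(t)\cdots G^{(k_d+m_d)}(t),
\]
so the left-hand side of \eqref{eq66} equals
\[
\frac{1}{\Gamma(s)}\int_0^\infty t^{s-1}e^{-xt}\,\frac{d^m}{dt^m}\bigl(G^{(k_1)}(t)\cdots G^{(k_d)}(t)\bigr)\,dt.
\]

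Second, I would integrate by parts $m$ times to transfer the derivatives off the product and onto $t^{s-1}e^{-xt}$. For $\Re(s)>m+k_1+\cdots+k_d+d$ the boundary terms at both $0$ and $\infty$ vanish, because $t^{s-1-j}$ kills the contribution at the origin for $j\le m$ while $G^{(k_j+m_j)}(t)$ has at worst polynomial growth dominated by $e^{-xt}$ at infinity. This produces a factor of $(-1)^m$ and leaves
\[
\frac{(-1)^m}{\Gamma(s)}\int_0^\infty \frac{d^m}{dt^m}\bigl(t^{s-1}e^{-xt}\bigr)\,G^{(k_1)}(t)\cdots G^{(k_d)}(t)\,dt.
\]

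Third, I would expand $\tfrac{d^m}{dt^m}(t^{s-1}e^{-xt})$ by the ordinary Leibniz rule, using $(e^{-xt})^{(j)}=(-x)^j e^{-xt}$ and $(t^{s-1})^{(m-j)}=\frac{\Gamma(s)}{\Gamma(s-m+j)}t^{s-m+j-1}$. After re-indexing $j\mapsto m-k$, this yields
\[
\frac{d^m}{dt^m}\bigl(t^{s-1}e^{-xt}\bigr)=\sum_{k=0}^m\binom{m}{k}(-x)^{m-k}\frac{\Gamma(s)}{\Gamma(s-k)}\,t^{s-k-1}e^{-xt}.
\]
Substituting this into the integral and collecting constants, the outer $(-1)^m$ combines with $(-x)^{m-k}=(-1)^{m-k}x^{m-k}$ to give $(-1)^k x^{m-k}$; the $\Gamma(s)$ cancels against $1/\Gamma(s)$; and the remaining integral is precisely $\Gamma(s-k)Z_{k_1,\dots,k_d}(s-k,x)$. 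This reproduces the right-hand side of \eqref{eq66}.

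Finally, both sides of \eqref{eq66} are meromorphic in $s$ by Theorem \ref{th5}, so the identity, proved in a right half-plane, extends to all of $\mathbb{C}$ by analytic continuation. The main delicacy I anticipate is the bookkeeping in the integration by parts: one must check carefully that the boundary terms $[\,(t^{s-1}e^{-xt})^{(j)}\cdot(G^{(k_1)}\cdots G^{(k_d)})^{(m-1-j)}\,]_0^\infty$ vanish for every $0\le j\le m-1$, which requires $\Re(s)>m+k_1+\cdots+k_d+d$ together with the fact that $G(t)=t/(1-e^{-t})$ is smooth at $t=0$ so its derivatives are uniformly bounded near the origin. Everything else is a routine Leibniz-rule computation.
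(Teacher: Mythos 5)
Your proposal is correct and follows essentially the same route as the paper's proof: multinomial Leibniz to recognize the left-hand side as the $m$-th derivative of $G^{(k_1)}\cdots G^{(k_d)}$, integration by parts $m$ times, and then the ordinary Leibniz rule on $(t^{s-1}e^{-xt})^{(m)}$, with your factor $\Gamma(s)/\Gamma(s-k)$ matching the paper's falling factorial $(s-1)_k$. Your added care about the vanishing of boundary terms and the final analytic continuation step only makes explicit what the paper leaves implicit.
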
 
\begin{proof}
By the general Leibniz rule, the left-hand side of (\ref{eq66}) is equal to 
\begin{align*}
&\frac{1}{\Gamma(s)}\int_0^\infty t^{s-1}\left(\sum_{m_1+\cdots+m_d=m}\binom{m}{m_1,\dots,m_d}G^{(m_1+k_1)}(t)\cdots G^{(m_d+k_d)}(t)\right)e^{-x t}dt\\
&=\frac{1}{\Gamma(s)}\int_0^\infty t^{s-1}\bigl(G^{(k_1)}(t)\cdots G^{(k_d)}(t)\bigr)^{(m)}e^{-x t}dt\,.
\end{align*} 
By integration by parts, 
$$
\int_0^\infty(t^{s-1}e^{-x t})^{(m)}G^{(k_1)}(t)\cdots G^{(k_d)}(t)dt
=(-1)^m\int_0^\infty t^{s-1}e^{-x t}\bigl(G^{(k_1)}(t)\cdots G^{(k_d)}(t)\bigr)^{(m)}dt\,. 
$$ 
On the other hand, we have 
\begin{align*}
&\int_0^\infty(t^{s-1}e^{-x t})^{(m)}G^{(k_1)}(t)\cdots G^{(k_d)}(t)dt\\
&=\sum_{k=0}^m\binom{m}{k}(-x)^{m-k}(s-1)_k\int_0^\infty t^{s-k-1}G^{(k_1)}(t)\cdots G^{(k_d)}(t)e^{-x t}dt\,. 
\end{align*} 
Hence, we get 
\begin{align*}
&\sum_{m_1+\cdots+m_d=m}\binom{m}{m_1,\dots,m_d}Z_{m_1+k_1,\dots,m_d+k_d}(s,x)\\
&=\sum_{k=0}^m\binom{m}{k}(-1)^k x^{m-k}\frac{(s-1)_k}{\Gamma(s)}\int_0^\infty t^{s-k-1}e^{-x t}dt\\
&=\sum_{k=0}^m\binom{m}{k}(-1)^k x^{m-k}Z_{k_1,\dots,k_d}(s-k,x)\,, 
\end{align*} 
which is (\ref{eq66}). 
\qed\end{proof}

\begin{Cor} 
We have 
\begin{equation} 
\sum_{m_1+\cdots+m_d=m}\binom{m}{m_1,\dots,m_d}Z_{m_1,\dots,m_d}(s,x)
=\sum_{k=0}^m\binom{m}{k}(-1)^k x^{m-k}\zeta_d(s-k,x)\,. 
\label{eq70} 
\end{equation}
\end{Cor}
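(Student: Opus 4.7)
The plan is to derive \eqref{eq70} as the immediate specialisation of Theorem~\ref{th6} obtained by taking $k_1 = k_2 = \cdots = k_d = 0$. Under this substitution the exponents $m_j + k_j$ appearing on the left-hand side of \eqref{eq66} collapse to $m_j$, so the left-hand side of \eqref{eq66} reproduces term-by-term the left-hand side of \eqref{eq70}. The right-hand side of \eqref{eq66} becomes $\sum_{k=0}^{m}\binom{m}{k}(-1)^k x^{m-k}\, Z_{0,\ldots,0}(s-k,x)$, so the corollary reduces to recognising $Z_{0,\ldots,0}(s,x)$ as the multiple Hurwitz zeta function $\zeta_d(s,x)$.

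For the required identification I would compute the defining Mellin integral directly. Writing $G(t) = t/(1-e^{-t})$ in
\[
Z_{0,\ldots,0}(s,x) = \frac{1}{\Gamma(s)}\int_0^\infty t^{s-1}\, G(t)^d\, e^{-xt}\, dt,
\]
expanding $(1-e^{-t})^{-d} = \sum_{k_1,\ldots,k_d \ge 0} e^{-(k_1+\cdots+k_d)t}$, and evaluating each resulting integral via $\int_0^\infty t^{u-1}e^{-yt}\,dt = \Gamma(u)/y^u$ reproduces the Dirichlet series defining $\zeta_d(s,x)$ after the $\Gamma$-factors cancel against the prefactor $1/\Gamma(s)$. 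This identification is valid a priori in the half-plane of absolute convergence, and then extends to the whole plane by the analytic continuation already established for both sides (Theorem~\ref{th5} on one hand and the standard Hurwitz–Barnes analytic continuation on the other).

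Substituting $s \mapsto s-k$ to replace each $Z_{0,\ldots,0}(s-k,x)$ by $\zeta_d(s-k,x)$ on the right-hand side of the specialised \eqref{eq66} then delivers \eqref{eq70}. The only bookkeeping step that would need genuine care is the swap of summation and integration in the Mellin calculation; this is a routine dominated-convergence manoeuvre carried out in the half-plane where $\Re(s)$ is large, and presents no real difficulty.
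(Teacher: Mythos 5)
Your reduction of \eqref{eq70} to the case $k_1=\cdots=k_d=0$ of Theorem~\ref{th6} is exactly the intended route, and that part is fine. The gap is in the identification $Z_{0,\dots,0}(s,x)=\zeta_d(s,x)$, which is the entire content of the remaining step and which your Mellin computation does not actually deliver. Since $G(t)=t/(1-e^{-t})=tF(t)$, we have $G(t)^d=t^d F(t)^d$, so
\begin{equation*}
Z_{0,\dots,0}(s,x)=\frac{1}{\Gamma(s)}\int_0^\infty t^{s+d-1}\frac{e^{-xt}}{(1-e^{-t})^d}\,dt
=\sum_{k_1,\dots,k_d\ge 0}\frac{\Gamma(s+d)}{\Gamma(s)}\cdot\frac{1}{(x+k_1+\cdots+k_d)^{s+d}}
=\frac{\Gamma(s+d)}{\Gamma(s)}\,\zeta_d(s+d,x)\,,
\end{equation*}
because each term of the expansion contributes $\Gamma(s+d)/(x+k_1+\cdots+k_d)^{s+d}$, not $\Gamma(s)/(x+k_1+\cdots+k_d)^{s}$. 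The $\Gamma$-factors therefore do not cancel: what survives is the polynomial $\Gamma(s+d)/\Gamma(s)=s(s+1)\cdots(s+d-1)$ together with a shift $s\mapsto s+d$ in the argument of $\zeta_d$. A sanity check at $d=1$ confirms this: $Z_0(s,x)=s\,\zeta(s+1,x)$, which at $s=0$ equals $1=B_0(x)$ (consistent with Theorem~\ref{th5}), whereas $\zeta(0,x)=\tfrac12-x$; so $Z_{0}(s,x)\neq\zeta(s,x)$.

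Consequently the substitution of $\zeta_d(s-k,x)$ for $Z_{0,\dots,0}(s-k,x)$ on the right-hand side is not justified, and what your argument actually establishes is
\begin{equation*}
\sum_{m_1+\cdots+m_d=m}\binom{m}{m_1,\dots,m_d}Z_{m_1,\dots,m_d}(s,x)
=\sum_{k=0}^m\binom{m}{k}(-1)^k x^{m-k}\,\frac{\Gamma(s-k+d)}{\Gamma(s-k)}\,\zeta_d(s-k+d,x)\,.
\end{equation*}
In fairness, the paper states \eqref{eq70} without this correction factor as well, so you have reproduced its oversight rather than introduced a new one; but as a proof of the displayed statement the argument breaks precisely at the sentence claiming that ``the $\Gamma$-factors cancel against the prefactor $1/\Gamma(s)$,'' and that cancellation is false.
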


Then by putting $s=-n$, we obtain the following.  

\begin{Cor}  
\begin{multline*} 
\sum_{m_1+\cdots+m_d=m}\binom{m}{m_1,\dots,m_d}\bigl(B_{m_1}(x)+\cdots+B_{m_d}(x)\bigr)^n\\
=\sum_{k=0}^m\binom{m}{k}(-1)^{k+d}x^{m-k}\frac{(n+k)!}{(n+k+d)!}B_{n+m+d}^{(d)}(x)\,. 
\end{multline*}  
In special, for $x=0$, we have 
$$
\sum_{m_1+\cdots+m_d=m}\binom{m}{m_1,\dots,m_d}\bigl(B_{m_1}+\cdots+B_{m_d}\bigr)^n
=\frac{(-1)^{m+d}(n+m)!B_{n+m+d}^{(d)}}{(n+m+d)!}\,. 
$$ 
\end{Cor}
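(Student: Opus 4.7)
The plan is to substitute $s=-n$ into the corollary $(\ref{eq70})$ and then identify both sides using the special-value formulas already established earlier in the paper. All the analytic heavy lifting has been done: the meromorphic continuation with controlled poles follows from Theorem $\ref{th5}$ for $Z_{m_1,\dots,m_d}$, and from the classical theory of the multiple Hurwitz zeta for $\zeta_d(s-k,x)$. In particular, $s=-n$ is never a pole of either side, so the identity $(\ref{eq70})$ can legitimately be evaluated there as a pointwise equality.

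For the left-hand side, I apply Theorem $\ref{th5}$, which gives
$$Z_{m_1,\dots,m_d}(-n,x) = \bigl(B_{m_1}(x)+\cdots+B_{m_d}(x)\bigr)^n,$$
understood as the multinomial expansion in the statement of that theorem. Weighting by $\binom{m}{m_1,\dots,m_d}$ and summing over compositions $m_1+\cdots+m_d=m$ reproduces exactly the LHS of the stated corollary. For the right-hand side, I invoke the interpolation formula $(\ref{interpolation1})$ at argument $-(n+k)$:
$$\zeta_d(-n-k,x) = (-1)^d\frac{(n+k)!}{(n+k+d)!}B_{n+k+d}^{(d)}(x),$$
and merge $(-1)^k(-1)^d = (-1)^{k+d}$ to produce the claimed identity. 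The $x=0$ specialization then follows immediately: the factor $x^{m-k}$ kills every term with $k<m$, so only the $k=m$ summand survives, collapsing the right side to $(-1)^{m+d}(n+m)!B_{n+m+d}^{(d)}/(n+m+d)!$, while on the left $B_{m_i}(0)=B_{m_i}$, yielding the quoted formula.

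The main obstacle, such as it is, is essentially bookkeeping: carefully tracking the signs produced by $(-1)^d$ from the $\Gamma$-quotient in $(\ref{interpolation1})$, the alternating $(-1)^k$ already present in $(\ref{eq70})$, and the factorial ratio $(n+k)!/(n+k+d)!$. No new analytic input is required beyond Theorem $\ref{th5}$ (interpolation of $Z_{m_1,\dots,m_d}$) and $(\ref{interpolation1})$ (interpolation of $\zeta_d$), both of which are already in place.
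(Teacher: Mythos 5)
Your proposal is correct and is exactly the argument the paper intends: substitute $s=-n$ into (\ref{eq70}), evaluate the left-hand side termwise via Theorem \ref{th5}, and evaluate $\zeta_d(-n-k,x)$ on the right via (\ref{interpolation1}), merging the signs $(-1)^k(-1)^d=(-1)^{k+d}$. The one point worth flagging is that your computation actually yields $B_{n+k+d}^{(d)}(x)$ inside the sum, whereas the printed statement has $B_{n+m+d}^{(d)}(x)$; the $k$-dependent subscript is what the derivation gives (the two agree in the $x=0$ specialization, where only the $k=m$ term survives), so the printed subscript is a typo in the corollary rather than a gap in your argument, but you should not assert that your formula reproduces the statement verbatim.
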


\section{Another method and further formulas}
 
In this section, we study the zeta function 
$$
Z_{m_1,\dots,m_d}(s,x):=\frac{1}{\Gamma(s)}\int_0^\infty t^{s-1}G^{(m_1)}(t)\cdots G^{(m_d)}(t)e^{-x t}dt 
$$ 
by another method.    First, we consider the function $Z_{m_1,m_2}(s,x)$ when $d=2$.

\begin{Lem} 
\begin{equation}  
Z_{m_1,m_2}(s,x)=(-1)^{m_2}\sum_{0\le k_1\le k_2\le m_2}\binom{m_2}{m_2-k_2,k,k_2-k}(-x)^{k_2-k}Z_{m_1+m_2-k_2,0}(s-k,x)\,. 
\label{eq:502}
\end{equation} 
\label{lem502} 
\end{Lem}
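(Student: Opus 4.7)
The plan is to reduce $Z_{m_1,m_2}(s,x)$ to the single-derivative objects $Z_{m_1+m_2-k_2,0}(s-k_1,x)$ by integrating by parts $m_2$ times and then applying the general Leibniz rule twice. Starting from the integral definition, and assuming $\Re(s)$ is large enough that all boundary terms vanish, I would transfer the $m_2$ derivatives off of $G^{(m_2)}(t)$ onto the remaining product $t^{s-1}G^{(m_1)}(t)e^{-xt}$, turning $G^{(m_2)}$ into $G$ and generating the global sign $(-1)^{m_2}$. This yields
\begin{equation*}
Z_{m_1,m_2}(s,x)=\frac{(-1)^{m_2}}{\Gamma(s)}\int_0^\infty G(t)\,\frac{d^{m_2}}{dt^{m_2}}\bigl[t^{s-1}G^{(m_1)}(t)e^{-xt}\bigr]\,dt.
\end{equation*}

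Next, I would apply Leibniz in two stages to the $m_2$-th derivative in the integrand. First, grouping $t^{s-1}e^{-xt}$ as one factor and $G^{(m_1)}(t)$ as the other, one Leibniz expansion produces a sum indexed by $k_2\in\{0,\dots,m_2\}$ weighted by $\binom{m_2}{k_2}$; arranging $k_2$ to count the derivatives falling on $t^{s-1}e^{-xt}$ turns $G^{(m_1)}$ into $G^{(m_1+m_2-k_2)}$. A second Leibniz step splits $t^{s-1}$ from $e^{-xt}$, producing the inner sum over $k_1\in\{0,\dots,k_2\}$ with factor $\binom{k_2}{k_1}(s-1)_{k_1}t^{s-k_1-1}(-x)^{k_2-k_1}e^{-xt}$. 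The resulting integral for each fixed pair $(k_1,k_2)$ is exactly $\Gamma(s-k_1)\,Z_{m_1+m_2-k_2,0}(s-k_1,x)$.

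To finish, I would use $(s-1)_{k_1}\Gamma(s-k_1)=\Gamma(s)$ to cancel the $1/\Gamma(s)$ prefactor, and combine the two binomial coefficients via $\binom{m_2}{k_2}\binom{k_2}{k_1}=\binom{m_2}{m_2-k_2,\,k_1,\,k_2-k_1}$ to obtain the trinomial in the statement (with the dummy $k$ there playing the role of $k_1$). The identity, established for $\Re(s)$ large, then extends to all admissible $s$ by the analytic continuation provided in Theorem~\ref{th5}.

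The main technical obstacle is verifying that the boundary terms in the $m_2$-fold integration by parts genuinely vanish. At $t=\infty$ this is immediate, since $e^{-xt}$ decays exponentially when $\Re(x)>0$ while the derivatives of $G$ grow at worst polynomially. At $t=0$ one must check that every partial derivative of $t^{s-1}$ still carries a positive power of $t$ after being paired with bounded values of $G^{(\ell)}(0)$ and $(-x)^j$; this holds as soon as $\Re(s)>m_2+1$, comfortably inside the half-plane $\Re(s)>m_1+m_2+2$ where $Z_{m_1,m_2}(s,x)$ is defined by an absolutely convergent integral. So this is pure bookkeeping rather than a genuine difficulty.
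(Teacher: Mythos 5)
Your proposal is correct and follows essentially the same route as the paper's proof: integrate by parts $m_2$ times to move the derivatives onto $t^{s-1}e^{-xt}G^{(m_1)}(t)$ (producing the sign $(-1)^{m_2}$), apply the Leibniz rule first with the split $\{t^{s-1}e^{-xt}\}\cdot\{G^{(m_1)}\}$ indexed by $k_2$ and then with the split $\{t^{s-1}\}\cdot\{e^{-xt}\}$ indexed by $k$, and absorb the falling factorial into $\Gamma(s-k)$ to recognize $Z_{m_1+m_2-k_2,0}(s-k,x)$. Your explicit verification that the boundary terms vanish is a detail the paper omits, but it does not change the argument.
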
 
\begin{proof} 
By integration by parts and the general Leibniz rule, we have 
\begin{align*} 
Z_{m_1,m_2}(s,x)&=\frac{(-1)^m}{\Gamma(s)}\int_0^\infty\bigl(t^{s-1}e^{-x t}G^{(m_1)}\bigr)^{(m_2)}G(t)dt\\
&=(-1)^m\sum_{k_2=0}^{m_2}\binom{m_2}{k_2}\frac{1}{\Gamma(s)}\int_0^\infty(t^{s-1}e^{-x t})^{(k_2)}G^{(m_1+m_2-k_2)}(t)G(t)dt\,.  
\end{align*} 
Hence, 
\begin{align*} 
&Z_{m_1,m_2}(s,x)\\
&=(-1)^{m_2}\sum_{0\le k\le k_2\le m_2}\binom{m_2}{k_2}\binom{k_2}{k}(-x)^{k_2-k}\frac{1}{\Gamma(s-k)}\int_0^\infty t^{s-k-1}e^{-x t}G^{(m_1+m_2-k_2)}(t)G(t)dt\\
&=(-1)^{m_2}\sum_{0\le k_1\le k_2\le m_2}\binom{m_2}{m_2-k_2,k,k_2-k}(-x)^{k_2-k}Z_{m_1+m_2-k_2,0}(s-k,x)\,. 
\end{align*} 
\qed\end{proof}

By Proposition \ref{bern} and Lemma \ref{lem502}, we obtain the following theorem. 

\begin{theorem}  
We have 
\begin{align*}
Z_{m_1,m_2}(s,x)&=\sum_{0\le k\le k_2\le m_2\atop 0\le j\le m_1+m_2-k_2}(-1)^{j+m_2-1}(j-1)!\binom{m_2}{m_2-k_2,k_2-k,k}(-x)^{k_2-k}\\
&\quad\times\left(\sts{m_1+m_2-k_2+1,j}{j}(s-k)(s-k+1)\zeta_{j+1}(s-k+2,x)\right.\\
&\left.\qquad +(m_1+m_2-k_2)\sts{m_1+m_2-k_2}{j}(s-k)\zeta_{j+1}(s-k+1,x)\right)\,.
\end{align*}
\label{th510}
\end{theorem}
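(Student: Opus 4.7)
The plan is to substitute the expression for $Z_{m_1+m_2-k_2,0}(s-k,x)$ (which Proposition \ref{bern} lets us compute) into the reduction formula from Lemma \ref{lem502}. So the work splits into two parts: first, use Lemma \ref{lem502} to reduce the two-variable zeta $Z_{m_1,m_2}(s,x)$ to a linear combination of one-variable zetas $Z_{n,0}(s-k,x)$ with $n=m_1+m_2-k_2$; second, compute $Z_{n,0}(s,x)$ in closed form in terms of multiple Hurwitz zetas $\zeta_{j+1}$.

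For the second part, the bridge is the identity $G(t)=b(-t)$, which gives $G^{(n)}(t)=(-1)^n b^{(n)}(-t)$. Substituting $x\mapsto -t$ in Proposition \ref{bern} and simplifying the sign from $1/(-t)^j = (-1)^j/t^j$, I would obtain an expansion of the form
\[
G^{(n)}(t)=\sum_{j=1}^{n+1}(-1)^{j+1}(j-1)!\left(\frac{\sts{n+1}{j}}{t^{j-1}}+\frac{n\sts{n}{j}}{t^j}\right)G(t)^j.
\]
Multiplying by $G(t)$ and using $G(t)^{j+1}=t^{j+1}/(1-e^{-t})^{j+1}$, the product $G^{(n)}(t)G(t)e^{-xt}$ becomes a linear combination of $t^2 e^{-xt}/(1-e^{-t})^{j+1}$ and $t\,e^{-xt}/(1-e^{-t})^{j+1}$. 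Inserting this into the Mellin integral defining $Z_{n,0}(s,x)$ and recognizing the resulting integrals as $\Gamma(s+2)\zeta_{j+1}(s+2,x)$ and $\Gamma(s+1)\zeta_{j+1}(s+1,x)$ respectively (via the same integral representation $\zeta_k(s,x)=\frac{1}{\Gamma(s)}\int_0^\infty t^{s-1}e^{-xt}/(1-e^{-t})^k\,dt$ that appeared in the proof of Theorem \ref{th1}), the ratios $\Gamma(s+2)/\Gamma(s)=s(s+1)$ and $\Gamma(s+1)/\Gamma(s)=s$ yield the Pochhammer factors visible in the final statement. This gives the closed form
\[
Z_{n,0}(s,x)=\sum_{j=1}^{n+1}(-1)^{j+1}(j-1)!\Bigl(\sts{n+1}{j}s(s+1)\zeta_{j+1}(s+2,x)+n\sts{n}{j}s\,\zeta_{j+1}(s+1,x)\Bigr).
\]

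Finally, I would feed this expression, with $n=m_1+m_2-k_2$ and $s$ shifted to $s-k$, into the right-hand side of Lemma \ref{lem502}. Combining the overall sign $(-1)^{m_2}$ from Lemma \ref{lem502} with the internal $(-1)^{j+1}$ gives $(-1)^{j+m_2-1}$, matching the stated theorem, while the trinomial coefficient and the factor $(-x)^{k_2-k}$ are carried through unchanged.

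The main obstacle I expect is bookkeeping rather than any conceptual difficulty: I have to be careful with the chain of sign flips coming from the three sources $(-1)^n$ in Proposition \ref{bern}, $(-1)^n$ from $G^{(n)}=(-1)^n b^{(n)}(-\cdot)$, and $1/(-t)^j=(-1)^j/t^j$, so that the two $(-1)^n$ factors cancel and only the $(-1)^{j+1}$ survives inside $Z_{n,0}$. I also have to track which exponent of $t$ pairs with which $1/(1-e^{-t})^{j+1}$, since these determine whether the resulting zeta is $\zeta_{j+1}(s-k+2,x)$ or $\zeta_{j+1}(s-k+1,x)$ and whether the Pochhammer factor is $(s-k)(s-k+1)$ or just $(s-k)$. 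Once these signs and shifts are reconciled, assembling the two summations into the single double (really triple) sum of the theorem is purely mechanical.
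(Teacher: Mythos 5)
Your proposal is correct and follows essentially the same route as the paper: reduce $Z_{m_1,m_2}$ to $Z_{m_1+m_2-k_2,0}(s-k,x)$ via Lemma \ref{lem502}, expand $G^{(n)}(t)$ through Proposition \ref{bern} with $x\mapsto -t$, and evaluate the resulting Mellin integrals of $t^{s+1}e^{-xt}F(t)^{j+1}$ and $t^{s}e^{-xt}F(t)^{j+1}$ as $s(s+1)\zeta_{j+1}(s+2,x)$ and $s\,\zeta_{j+1}(s+1,x)$. Your sign bookkeeping (the two $(-1)^n$ factors cancelling, leaving $(-1)^{j+1}=(-1)^{j-1}$, combined with $(-1)^{m_2}$ to give $(-1)^{j+m_2-1}$) matches the paper's computation exactly.
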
 
\begin{proof} 
By Proposition \ref{bern}, we have 
\begin{align*} 
G^{(m_1+m_2-k_2)}(t)&=(-1)^{m_1+m_2-k_2}b^{(m_1+m_2-k_2)}(-t)\\
&=\sum_{j=1}^{m_1+m_2-k_2+1}\frac{(j-1)!(-1)^{j-1}}{t^j}\left(\sts{m_1+m_2-k_2+1}{j}t\right.\\
&\left.\quad+(m_1+m_2-k_2)\sts{m_1+m_2-k_2}{j}\right)B(-t)^j\\
&=\sum_{j=1}^{m_1+m_2-k_2+1}(j-1)!(-1)^{j-1}\left(\sts{m_1+m_2-k_2+1}{j}t\right.\\
&\left.\quad+(m_1+m_2-k_2)\sts{m_1+m_2-k_2}{j}\right)F(t)^j\,. 
\end{align*} 
Note that 
$$
G(t)=\frac{t}{1-e^{-t}}=t F(t)\,. 
$$ 
Therefore,  
\begin{align*} 
&Z_{m_1+m_2-k_2,0}(s-k,x)\\
&=\frac{1}{\Gamma(s-k)}\int_0^\infty t^{s-k-1}e^{-x t}G^{(m_1+m_2-k_2)}(t)G(t)\ dt\\
&=\sum_{j=1}^{m_1+m_2-k_2+1}(j-1)!(-1)^{j-1}\left(\sts{m_1+m_2-k_2+1}{j}\frac{1}{\Gamma(s-k)}\int_0^\infty t^{s-k+1}e^{-x t}F(t)^{j+1}\ dt\right.\\
&\quad\left.+(m_1+m_2-k_2)\sts{m_1+m_2-k_2}{j}\frac{1}{\Gamma(s-k)}\int_0^\infty t^{s-k}e^{-x t}F(t)^{j+1}\ dt\right)\\
&=\sum_{j=1}^{m_1+m_2-k_2+1}(j-1)!(-1)^{j-1}\left(\sts{m_1+m_2-k_2+1}{j}(s-k+1)(s-k)\zeta_{j+1}(s-k+2,x)\right.\\
&\quad\left.+(m_1+m_2-k_2)\sts{m_1+m_2-k_2}{j}(s-k)\zeta_{j+1}(s-k+1,x)\right)\,.
\end{align*} 
Together with Lemma \ref{lem502}, we get the desired result.  
\qed\end{proof}

Putting $s=-n$ in Theorem \ref{th510}, we obtain the following. 

\begin{Cor} 
We have 
\begin{align*}
&(B_{m_1}+B_{m_2})^n\\
&=\sum_{0\le k\le k_2\le m_2\atop 0\le j\le m_1+m_2-k_2}(-1)^{m_2}\binom{m_2}{k_2}(j-1)!\left(\sts{m_1+m_2-k_2+1,j}{j}\frac{(n+k_2)!B_{n+k_2+j-1}^{(j+1)}}{(n+k_2+j-1)!}\right.\\
&\left.\qquad -(m_1+m_2-k_2)\sts{m_1+m_2-k_2}{j}\frac{(n+k_2)!B_{n+k_2+j}^{(j+1)}}{(n+k_2+j)!}\right)\,.
\end{align*}
\end{Cor}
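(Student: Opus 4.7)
The plan is to substitute $s=-n$ into Theorem~\ref{th510} and translate every surviving zeta value into a higher-order Bernoulli number via the interpolation formula~(\ref{interpolation1}). The left-hand side is immediate: by Theorem~\ref{th5} we have $Z_{m_1,m_2}(-n,x)=(B_{m_1}(x)+B_{m_2}(x))^n$, and setting $x=0$ in that identity produces $(B_{m_1}+B_{m_2})^n$, which is the stated left-hand side.

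The core computation is therefore the evaluation of $(s-k)(s-k+1)\zeta_{j+1}(s-k+2,x)$ and $(s-k)\zeta_{j+1}(s-k+1,x)$ at $s=-n$. Applying (\ref{interpolation1}) with order $j+1$ converts them to $(-1)^{j+1}\tfrac{(n+k-2)!}{(n+k+j-1)!}B_{n+k+j-1}^{(j+1)}(x)$ and $(-1)^{j+1}\tfrac{(n+k-1)!}{(n+k+j)!}B_{n+k+j}^{(j+1)}(x)$ respectively, and the Pochhammer prefactors $(n+k)(n+k-1)$ and $-(n+k)$ telescope cleanly with the numerator factorials to produce $(-1)^{j+1}\tfrac{(n+k)!}{(n+k+j-1)!}B_{n+k+j-1}^{(j+1)}(x)$ and $-(-1)^{j+1}\tfrac{(n+k)!}{(n+k+j)!}B_{n+k+j}^{(j+1)}(x)$. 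Combining the $(-1)^{j+1}$ from interpolation with the sign $(-1)^{j+m_2-1}$ already present in Theorem~\ref{th510} collapses both contributions to $(-1)^{m_2}$ in the first term and $(-1)^{m_2+1}$ in the second, which is exactly the sign pattern recorded in the statement (the explicit minus sign in front of the second summand absorbs the extra factor of $-1$).

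I would finish by specialising to $x=0$: the factor $(-x)^{k_2-k}$ forces $k=k_2$, making the trinomial coefficient $\binom{m_2}{m_2-k_2,\,k_2-k,\,k}$ collapse to $\binom{m_2}{k_2}$, so the indexing $0\le k\le k_2\le m_2$ displayed in the corollary is effective with $k$ and $k_2$ locked together. Together with the identification of the left-hand side via Theorem~\ref{th5} this yields precisely the claimed identity. No step is genuinely deep; the only real obstacle is careful bookkeeping of signs, shifted arguments, and Pochhammer cancellations, so that the two families of Bernoulli values of common order $j+1$ but of different degrees $n+k+j-1$ and $n+k+j$ appear in the exact combination required.
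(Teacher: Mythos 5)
Your proposal is correct and is exactly the route the paper intends: it derives the corollary by setting $s=-n$ in Theorem~\ref{th510}, converting the zeta values via the interpolation formula~(\ref{interpolation1}) with the Pochhammer factors absorbing the factorials, identifying the left-hand side through Theorem~\ref{th5}, and specialising to $x=0$ so that $(-x)^{k_2-k}$ forces $k=k_2$ and the trinomial coefficient collapses to $\binom{m_2}{k_2}$. Your sign bookkeeping ($(-1)^{j+m_2-1}\cdot(-1)^{j+1}=(-1)^{m_2}$ for the first family and the extra $-1$ from $(s-k)=-(n+k)$ producing the explicit minus in the second) checks out.
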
 

\medskip

We find a very intriguing formula in the above corollary. Similarly, we can do the same for any $d\geq 3$ and obtain a generalization of the previous Theorem. However, the formulas obtained for $(B_{m_1}+\dots+B_{m_d})^n$ are not as pretty as in the case $d =2$. For this reason, we do not wish to state them here.



\begin{thebibliography}{99}


\bibitem{AD1} 
T. Agoh and K. Dilcher, {\em 
Convolution identities and lacunary recurrences for Bernoulli numbers}, 
J. Number Theory {\bf 124} (2007), 105--122.  

\bibitem{AD2} 
T. Agoh and K. Dilcher, {\em 
Higher-order recurrences for Bernoulli numbers}, 
J. Number Theory {\bf 129} (2009), 1837--1847. 
\bibitem{Bay-Be} A. Bayad and M. Beck, {\em Relations for BernoulliÐBarnes numbers and Barnes zeta functions,} Int. Journal of Number Theory {\bf 10} (2014), 1321-1335.
\bibitem{Chen} K.-W. Chen, {\em Sums of products of generalized Bernoulli polynomials}, Pacific J. Math.,  {\bf 208} (2003), 39-52.
\bibitem{D} K. Dilcher, {\em 
Sums of products of Bernoulli numbers},  
J. Number Theory {\bf 60} (1996), 23--41.  
\bibitem{Eie} M. Eie,  {\em A note on Bernoulli numbers and Shintani generalized Bernoulli polynomials}, Trans. Am. Math. Soc. {\bf 348}(1996), 1117-1136. 
\bibitem{kamano} K. Kamano, {\em Sums of products of hypergeometric Bernoulli numbers}, J. Number Theory {\bf 130} (2010), 2259-2271.
\bibitem{Min-Soo}   M.-S. Kim,  {\em  A note on sums of products of Bernoulli numbers}, Appl. Math. Lett. {\bf 24} (1) (2011), 55-61. 
\bibitem{Pet} A. Petojevi\'c, {\em New sums of products of Bernoulli numbers}, Integral Transforms Spec. Funct.  {\bf 19} (2008), 105-114. 
\bibitem{pansun} Z.-W.  Sun and H. Pan, \emph{Identities concerning {B}ernoulli and {E}uler
  polynomials}, Acta Arith. \textbf{125} (2006), no.~1, 21--39.


\end{thebibliography}
\end{document}